\theoremstyle{plain}
\newtheorem{theorem}{Theorem}[section]
\newtheorem{lemma}[theorem]{Lemma}
\theoremstyle{definition}
\newtheorem{definition}{Definition}[section]
\newtheorem{remark}[definition]{Remark}
\newcommand{\bbE}{{\ensuremath{\mathbb{E}}} }
\newcommand{\bbN}{{\ensuremath{\mathbb{N}}} }
\newcommand{\bbZ}{{\ensuremath{\mathbb{Z}}} }
\newcommand{\e}{\varepsilon}
\newcommand{\eps}{\mathcal{E}_j}
\renewcommand{\le}{\leqslant}
\renewcommand{\ge}{\geqslant}
\renewcommand{\leq}{\leqslant}
\begin{document}

\title{Improved bounds in the scaled Enflo type inequality  for Banach spaces}\thanks{O.~G. was partially supported by NSF grant CCF-0635078.  A.~N. was supported in part by NSF grants CCF-0635078 and CCF-0832795, BSF
grant 2006009, and the Packard Foundation.}
\author{Ohad Giladi}
\address{Courant Institute\\ New York University}
\email{giladi@cims.nyu.edu}
\author{Assaf Naor}
\address{Courant Institute\\ New York University}
\email{naor@cims.nyu.edu}
\subjclass[2010]{46B07,46B20,51F99}

\begin{abstract}
It is shown that if $(X,\|\cdot\|_X)$ is a Banach space with Rademacher type $p\ge 1$ then for every $n \in \bbN$ there exists an even integer $m \lesssim n^{2-1/p}\log n$ such that for every $f:\bbZ_m^n \to X$,
\begin{align*}
\bbE_{x,\e}\Bigg[\left\|{f\left({x+\frac m 2 \e}\right)-f(x)}\right\|_X^p\Bigg] \lesssim_X m^p\sum_{j=1}^n\bbE_x\Big[\|f(x+e_j)-f(x)\|_X^p\Big],
\end{align*}
where the expectation is with respect to uniformly chosen $x \in \bbZ_m^n$ and $\e \in \{-1,1\}^n$. This improves a bound of $m \lesssim n^{3-2/p}$ that was obtained in ~\cite{MN07}. The proof is based on an augmentation of the ``smoothing and approximation'' scheme, which was implicit in~\cite{MN07}.
\end{abstract}
\maketitle

\section{Introduction}
A Banach space $(X,\|\cdot\|_X)$ is said to have Rademacher type $p\ge 1$ if there is a constant $T<\infty$ such that for every $n \in \bbN$ and for every $x_1,x_2,\dots,x_n \in X$,
\begin{align}\label{def:rad-type}
\bbE_{\e}\Bigg[\Big\|{\sum_{j=1}^n \e_jx_j}\Big\|_X^p\Bigg] \le T^p\sum_{j=1}^n\|x_j\|_X^p,
\end{align}
where the expectation in~\eqref{def:rad-type} is taken with respect to the uniform probability measure on $\{-1,1\}^n$. By considering the case of the real line, we necessarily have $p\le 2$. The smallest possible $T$ for which~\eqref{def:rad-type} holds is denoted by $T_p(X)$. The notion of Rademacher type is clearly a linear notion, as inequality~\eqref{def:rad-type} involves random linear combinations of vectors in $X$.

A Banach space $(X,\|\cdot\|_X)$ is said to be finitely representable in a Banach space $(Y,\|\cdot\|_Y)$ if there exists a constant $D<\infty$ such that for every finite dimensional subspace $E$ of $X$ there exists a subspace $F$ of $Y$ and a map $T:E\to F$ with $\|T\|\cdot\|T^{-1}\| \le D$. A classical theorem of Ribe (see~\cite{Ribe76} and also~\cite{BL00}) states that if two Banach spaces $X$ and $Y$ are uniformly homeomorphic, then $X$ is finitely representable in $Y$ and vice versa. This theorem motivated what is now known as the ``Ribe program'': finding concrete metric characterizations of local properties of Banach spaces (a property is said to be local if it depends only on finitely many vectors). 

In particular, Ribe's theorem suggests that the notion of Rademacher type has a purely metric characterization. Finding a concrete metric characterization of Rademacher type is a long standing problem that  goes back to the work of Enflo. Following Enflo, we say that a metric space $(\mathcal M,d_{\mathcal M})$ has Enflo type $p>0$ if there exists a constant $T<\infty$ such that for every $n \in \bbN$ and every $f: \{-1,1\}^n \to \mathcal M$,
\begin{multline}\label{def:enflo-type}
\bbE_{\e}\Big[d_{\mathcal M}(f(\e),f(-\e))^p\Big]
\\ \le T^p\sum_{j=1}^n \bbE_{\e}\Big[d_{\mathcal M}(f(\e_1,\dots,\e_{j-1},\e_j,\e_{j+1}\dots,\e_n),f(\e_1,\dots,\e_{j-1},-\e_j,\e_{j+1},\dots,\e_n))^p\Big].
\end{multline}

In~\cite{Enflo77}, Enflo asked whether for the class of Banach spaces, Enflo type is equivalent to Rademacher type. Clearly, Enflo type $p$ implies Rademacher $p$: simply apply inequality~\eqref{def:enflo-type} to the function $f(\e) = \sum_{j=1}^n\e_jx_j$, and inequality~\eqref{def:rad-type} is obtained. In the other direction, Pisier proved (see~\cite[Ch.\ 7]{Pisier86}) that if a Banach space has Rademacher type $p$ then it has Enflo type $p'$ for every $p'<p$. The question of whether Rademacher type $p$ implies Enflo type $p$ remains an interesting open problem. Naor and Schechtman showed~\cite{NS02} that the answer is positive for the class of UMD Banach spaces. We refer to the work of Gromov~\cite[Sec.\ 9.1]{Gro83} and Bourgain, Milman and Wolfson~\cite{BMW86} for earlier results related to the notion of non-linear type, and to the work of Ball~\cite{Ball92} for an important variant of~\eqref{def:enflo-type} known as Markov type.

Motivated by their work on metric cotype~\cite{MN08}, Mendel and Naor defined in~\cite{MN07} the notion of scaled Enflo type. A metric space $(\mathcal M,d_{\mathcal M})$ is said to have scaled Enflo type $p>0$ with constant $\theta <\infty$ if for every $n \in \bbN$ there exists an $m \in 2\bbN$ such that for every $f:\bbZ_m^n \to \mathcal M$,
\begin{align}\label{def:scaled-enflo-type}
\bbE_{x,\e}\Bigg[d_{\mathcal M}\left({f\left({x+\frac m 2 \e}\right),f(x)}\right)^p\Bigg] \le \theta^p m^p\sum_{j=1}^n \bbE_{x}\Big[d_{\mathcal M}\left({f(x+e_j),f(x)}\right)^p\Big].
\end{align}
In~\eqref{def:scaled-enflo-type} and in what follows, $\{e_j\}_{j=1}^n$ denotes the standard basis of $\bbZ_m^n$. In~\cite{MN07} the following theorem was proved:
\begin{theorem}\label{thm:equivalence}
A Banach space $X$ has Rademacher type $p\in [1,2]$ if and only if it has scaled Enflo type $p$.
\end{theorem}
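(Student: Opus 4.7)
The plan is to prove both directions. The implication scaled Enflo type $\Rightarrow$ Rademacher type is elementary and proceeds by applying inequality~\eqref{def:scaled-enflo-type} to a carefully chosen test function $f:\bbZ_m^n\to X$. Given $x_1,\dots,x_n\in X$, I would take $f(y)=\sum_{j=1}^n g(y_j)\,x_j$, where $g:\bbZ_m\to\bbR$ is chosen so that its single-step differences $g(y+1)-g(y)$ are of order $1/m$ while the values $g(y+m/2)-g(y)$, viewed as functions of a uniform $y\in\bbZ_m$, are comparable in distribution to a Rademacher random variable (after Kahane's contraction principle, a smoothed sawtooth or a trigonometric choice works). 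The factor $m^p$ on the right-hand side of~\eqref{def:scaled-enflo-type} then cancels against the $(1/m)^p$ coming from the single-step gradients, producing exactly~\eqref{def:rad-type} up to an absolute constant multiple of the scaled Enflo type constant $\theta$.

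For the converse ``$T_p(X)<\infty\Rightarrow$ scaled Enflo type $p$'', which carries the quantitative content of the theorem, the approach is a \emph{smoothing and approximation} scheme. Fix $f:\bbZ_m^n\to X$ for an even $m$ to be chosen. I would introduce the semigroup-style averaging operator
\[
S_t f(x)=\bbE\bigl[f(x+W_t)\bigr],
\]
where $W_t$ is a $t$-step random walk on $\bbZ_m^n$ with increments uniform on $\{\pm e_1,\dots,\pm e_n\}$, and work with $S_t f$ in place of $f$. Three ingredients are needed. First, an approximation bound for $\bbE_x\|f(x)-S_t f(x)\|_X^p$ in terms of $t$ and $\sum_j\bbE_x\|f(x+e_j)-f(x)\|_X^p$, obtained by writing $f(x+W_t)-f(x)$ as a telescoping sum along the $t$ steps of $W_t$ and invoking Rademacher type of $X$ on the independent signs of those steps. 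Second, a decomposition $(S_t f)(x+\tfrac m2\e)-(S_t f)(x)=\sum_{j=1}^n\Delta_j(x,\e)$ obtained by switching one coordinate at a time, followed by a symmetrization that uses the translation invariance of the uniform measure on $\bbZ_m^n$ to convert the dependence of $\Delta_j$ on $(\e_1,\dots,\e_j)$ into a Rademacher linear combination to which~\eqref{def:rad-type} can be applied. Third, an optimization of $t$ and $m$ that balances the smoothing error in the first step against the discrete gradient control produced by the second.

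The main obstacle is the second ingredient: discrete derivatives are genuinely nonlinear, and a long $(m/2)$-step difference of $S_t f$ along a direction $\e$ has to be reduced, through the smoothness gained by applying $S_t$, to something proportional to a rescaled single-step increment of $S_t f$ at a random base point before Rademacher type~\eqref{def:rad-type} becomes usable. The improvement over the $m\lesssim n^{3-2/p}$ bound of~\cite{MN07} should come from a sharper estimate in the first ingredient that exploits the fact that the walk $W_t$ distributes its $t$ steps across all $n$ coordinates simultaneously, so that the effective per-coordinate ``noise'' is of order $t/n$ rather than $t$. Optimizing $t$ accordingly, and keeping track of the logarithmic loss entering through the symmetrization in step two, is what should produce the target bound $m\lesssim n^{2-1/p}\log n$.
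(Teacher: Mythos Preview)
The paper does not itself prove Theorem~\ref{thm:equivalence}; the equivalence is quoted from~\cite{MN07}, and the paper's own contribution is the quantitative Theorem~\ref{main-thm-type}, which re-proves the direction ``Rademacher type $\Rightarrow$ scaled Enflo type'' with the improved bound on $m$. Your easy direction is fine and is essentially how it is done in~\cite{MN07}.

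For the hard direction, your outline diverges from the paper in both the smoothing operator and, more importantly, in the structure of the main step. The paper does not decompose the smoothed diagonal difference coordinate by coordinate. Instead it telescopes \emph{along the diagonal direction} $\e$ into $m/4$ increments of the form $(\Delta_{[n]}f)(x+\e)-(\Delta_{[n]}f)(x-\e)$ (see~\eqref{use-of-holder}), and the heart of the argument is to bound this single-$\e$-step quantity. This is done via a specific combinatorial identity from~\cite{GMN10} (equation~\eqref{follows-from} here) expressing the smoothed $\e$-increment as a genuine Rademacher sum $\sum_j\e_j\bigl[\mathcal{E}_j f(x+e_j)-\mathcal{E}_j f(x-e_j)\bigr]$, to which~\eqref{def:rad-type} applies directly, plus error terms $R_{i,l}$. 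Those errors are controlled via Pisier's inequality together with type (Lemma~\ref{lemma:bound-R}); this is where the $\log n$ enters, not through a symmetrization step as you suggest.

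The genuine gap in your plan is the second ingredient. Switching one coordinate at a time produces a sum $\sum_j\Delta_j(x,\e)$ in which $\Delta_j$ depends on $\e_1,\dots,\e_j$; translation invariance lets you absorb the previously switched coordinates into $x$, but the resulting terms are still not of the form $\e_j\, v_j$ with $v_j$ independent of $\e$, so~\eqref{def:rad-type} does not apply. Moreover, each $\Delta_j$ is a length-$m/2$ increment in a single coordinate, and your own third paragraph concedes that one must first reduce long increments to single-step ones before type becomes usable --- yet your proposal supplies no mechanism for that reduction. The paper's mechanism is precisely the combinatorial identity linking $\Delta_{[n]}$ to the operators $\mathcal{E}_j$, which has no evident analogue for the random-walk semigroup $S_t$. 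A secondary point: you invoke Rademacher type for the approximation bound, but in the paper's Lemma~\ref{lemma:approx} only the triangle inequality and H\"older are used, with no appeal to type.
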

The notion of scaled Enflo type thus gives a purely metric characterization of Rademacher type. While the value of $m$ is implicit in Theorem~\ref{thm:equivalence}, it does play a crucial role. Note that by choosing $m=2$ in~\eqref{def:scaled-enflo-type}, the original Enflo type inequality~\eqref{def:enflo-type} is obtained. Therefore, finding the smallest $m$ for which inequality~\eqref{def:scaled-enflo-type} holds is a question of great interest. In~\cite{MN07} it was shown that if a Banach space has Rademacher type $p\in [1,2]$ then it has scaled Enflo type $p$ with $m \lesssim n^{3-2/p}$. Motivated by the recent progress in~\cite{GMN10}, we obtain the following improved bound on $m$:
\begin{theorem}\label{main-thm-type}
Assume that a Banach space has Rademacher type $p\in [1,2]$. Then for every $n \in \bbN$, there exists $m \in 4\bbN$ with $m \lesssim n^{2-1/p}\log n$ such that for every $f: \bbZ_m^n \to X$,
\begin{align}\label{improved-type-ineq}
\bbE_{x,\e}\Bigg[\left\|{f\left({x+\frac m 2 \e}\right) - f(x)}\right\|_X^p\Bigg] \lesssim_X m^p \sum_{j=1}^n\bbE_x\Big[\left\|{f(x+e_j)-f(x)}\right\|_X^p\Big].
\end{align}
\end{theorem}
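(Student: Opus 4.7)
The strategy is to follow the ``smoothing and approximation'' scheme that underlies the bound $m\lesssim n^{3-2/p}$ of \cite{MN07}, with the sharper averaging ideas introduced in \cite{GMN10} for the cotype analogue of the problem. Let $P_t$ denote the $t$-step averaging operator of the standard random walk on the Cayley graph of $\bbZ_m^n$ (generated by $\{\pm e_1,\ldots,\pm e_n\}$), and set $F:=P_tf$ for a smoothing time $t\in\bbN$ to be chosen in terms of $n$ and $m$. The decomposition
\begin{equation*}
f\Bigl(x+\tfrac{m}{2}\e\Bigr)-f(x)\;=\;\underbrace{(f-F)\Bigl(x+\tfrac{m}{2}\e\Bigr)}_{E_1}\;+\;\underbrace{F\Bigl(x+\tfrac{m}{2}\e\Bigr)-F(x)}_{M}\;-\;\underbrace{(f-F)(x)}_{E_2}
\end{equation*}
reduces the problem to (a) bounding the approximation errors $\bbE_x\|E_i\|_X^p$ in terms of $t$ and the gradient $G_f:=\sum_{j=1}^n\bbE_x\|f(x+e_j)-f(x)\|_X^p$, and (b) bounding the main term $\bbE_{x,\e}\|M\|_X^p$ in terms of $m$, $t$, and $G_f$.

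\textbf{Step by step.} For (a), I would write $f(x+W_t)-f(x)$ as a telescoping sum along the steps of the random walk $W_t$ underlying $P_t$, use Jensen and the translation-invariance of the uniform measure on $\bbZ_m^n$, and obtain $\bbE_x\|E_i\|_X^p\lesssim\phi(t)\cdot G_f$ for an explicit increasing function $\phi$ (in the simplest non-type estimate, $\phi(t)\approx t^p/n$). For (b), I would represent $F(x+\tfrac{m}{2}\e)-F(x)$ as a sum of $\tfrac{m}{2}$ macroscopic jumps along a random path from $x$ to $x+\tfrac{m}{2}\e$, each jump being a signed telescope in a coordinate direction determined by $\e$, and apply the Rademacher type inequality \eqref{def:rad-type} of $X$ with respect to the sign vector $\e$. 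A naive application yields $\bbE_{x,\e}\|M\|_X^p\lesssim T_p(X)^p\cdot m^p\sum_{j=1}^n\bbE_x\|F(x+e_j)-F(x)\|_X^p$, and since $P_t$ is a contraction (on $L^p(\bbZ_m^n;X)$, by Jensen), this is at most $T_p(X)^p\cdot m^p\cdot G_f$, which has exactly the right form for \eqref{improved-type-ineq}. Balancing the two contributions then fixes $t$ as a function of $m$ and $n$.

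\textbf{Main obstacle.} The hard part is that the naive execution just sketched does not deliver the improved exponent: the signed-sum representation of $M$ unavoidably involves on the order of $nm$ elementary increments, and type only improves on the triangle-inequality bound when these increments are effectively independent, which for a generic non-smooth $f$ they are not. The refinement from \cite{GMN10} that I would import is to exploit the spectral decomposition of $P_t$ on $\bbZ_m^n$: since $P_t$ damps Fourier modes of frequency $\gtrsim 1/\sqrt{t}$ by an exponential factor, the high-frequency contribution to $M$ can be estimated directly by a heat-kernel bound, while the low-frequency part is ``linear enough'' that type can be applied with a loss of only $n^{1-1/p}$ per macroscopic jump rather than a full factor $n$. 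A dyadic sum over the $O(\log n)$ frequency scales between $1$ and $n$, together with the optimal choice of $t$ equating the high- and low-frequency contributions and the approximation errors $E_1,E_2$, is what is expected to produce both the improved exponent $n^{2-1/p}$ and the extra logarithmic factor.
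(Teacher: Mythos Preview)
Your proposal has a genuine gap at the heart of Step~(b). The ``naive application'' you describe, which would give
\[
\bbE_{x,\e}\|M\|_X^p \;\lesssim\; T_p(X)^p\, m^p \sum_{j=1}^n \bbE_x\|F(x+e_j)-F(x)\|_X^p,
\]
is not a consequence of Rademacher type. After telescoping $F(x+\tfrac{m}{2}\e)-F(x)$ into $m/4$ increments of the form $F(y+\e)-F(y-\e)$, you are left needing the Enflo-type bound $\bbE_\e\|F(y+\e)-F(y-\e)\|_X^p \lesssim \sum_j \|F(y+e_j)-F(y-e_j)\|_X^p$. That is precisely the open question the whole theorem is trying to circumvent, so invoking it (even for the smoothed function $F$) is circular. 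The smoothing does not by itself make $F(y+\e)-F(y-\e)$ into a genuine Rademacher sum $\sum_j \e_j v_j$.

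Your proposed fix via the spectral decomposition of $P_t$ and a dyadic frequency argument is not what \cite{GMN10} actually does, and it is not clear it can be made to work here: Fourier coefficients of $X$-valued functions do not interact with Rademacher type in the way they would with a Hilbert-space Plancherel identity, so ``low frequency $\Rightarrow$ approximately linear $\Rightarrow$ type applies with loss $n^{1-1/p}$'' is an unsubstantiated heuristic. In the paper, the smoothing operator is not $P_t$ but a specific $\ell_1$-ball average $\Delta_{[n]}$, and the key step is a purely combinatorial identity (imported from \cite{GMN10}) that rewrites $\Delta_{[n]}f(x+\e)-\Delta_{[n]}f(x-\e)$ as a sum $\sum_j \e_j[\mathcal{E}_jf(x+e_j)-\mathcal{E}_jf(x-e_j)]$ --- to which Rademacher type \emph{does} apply directly --- plus explicit error terms $R_{i,l}$. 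These error terms are controlled by Pisier's inequality, and it is Pisier's inequality (not a dyadic frequency sum) that produces the $\log n$ factor. The approximation bound for $\Delta_{[n]}$ gives $A\approx k\,n^{1-1/p}$, the smoothing lemma requires $k\gtrsim n\log n$, and equating $A\lesssim m$ yields $m\lesssim n^{2-1/p}\log n$.
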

The outline of the proof of Theorem~\ref{main-thm-type} is as follows: we begin by describing a general ``smoothing and approximation'' scheme. This scheme allows us to replace $f$ in inequality~\eqref{def:scaled-enflo-type} by a ``smoothed'' version of $f$ and then use inequality~\eqref{def:rad-type}. This is discussed in detail in Section~\ref{sec:smooth-approx}. Once we have the smoothing and approximation scheme, the proof of Theorem~\ref{main-thm-type} is straightforward. This is done in Section~\ref{sec:proof-of-thm}. Nevertheless, the smoothing and approximation scheme relies on two technical lemmas. These lemmas are proved in Section~\ref{sec:proof-of-lemmas}. The logarithmic factor in Theorem~\ref{main-thm-type} appears due to an additional complication that does not arise in~\cite{GMN10} (where there is no such logarithmic term); this is overcome here via an application of Pisier's inequality~\cite{Pisier86}. Thus, in particular, due to~\cite{NS02}, for the class of UMD spaces the logarithmic factor in Theorem~\ref{main-thm-type} can be dropped.

\subsection*{Notation}
We use $\lesssim, \gtrsim$ to indicate that an inequality holds true with an implied absolute constant. Also, we use $\lesssim_X, \gtrsim_X$ to indicate that the implied constant depends on $p$ and $T_p(X)$ and $\lesssim_p, \gtrsim_p$ if the constant depends on $p$ only. Also, $\mu$ will denote the uniform probability measure on $\bbZ_m^n$ and $\tau$ will denote the unifrom probability measure on $\{-1,1\}^n$. Finally, $[n]$ will denote the set $\{1,2,\dots,n\}$.

\section{The smoothing and approximation scheme for the case of type}\label{sec:smooth-approx}
Following~\cite{GMN10}, we investigate the approach which is implicit in~\cite{MN07}. Given $f:\bbZ_m^n \to X$ and a probability measure $\nu$ on $\bbZ_m^n$, let
\begin{align*}
f*\nu(x) = \int_{\bbZ_m^n}f(x-y)d\nu(y).
\end{align*}
For a Banach space $X$ with Rademacher type $p$, suppose that we are given a probability measure $\nu$ that satisfies the following two properties:

\medskip

\noindent{\em (A) Approximation property:}
\begin{align}\label{approximation}
\int_{\bbZ_m^n}\left\|{f*\nu(x)-f(x)}\right\|_X^pd\mu(x) \lesssim_X A^p\sum_{j=1}^n\int_{\bbZ_m^n}\|f(x+e_j)-f(x)\|_X^pd\mu(x).
\end{align}

\noindent{\em (S) Smoothing property:}
\begin{multline}\label{smoothing}
\int_{\bbZ_m^n}\int_{\{-1,1\}^n}\left\|{f*\nu(x+\e)-f*\nu(x-\e)}\right\|_X^pd\tau(\e)d\mu(x) \\  \lesssim_X  S^p\sum_{j=1}^n\int_{\bbZ_m^n}\|f(x+e_j)-f(x)\|_X^pd\mu(x).
\end{multline}

The goal is to deduce inequality~\eqref{def:scaled-enflo-type} from inequality~\eqref{def:rad-type}. It is known that~\eqref{def:scaled-enflo-type} holds for 'linear' functions $x\mapsto\sum_{j=1}^nx_jv_j$ (this statement is not completely accurate but is sufficient to give an intuition). So, the idea is to first replace $f$ by a smoothed version of it which locally linear on average. The way to measure the smoothness of the function is given by the smoothing property~\eqref{smoothing}. On the other hand, the smoothed version of $f$ has to be close enough to $f$ itself, since the final goal is to prove inequality~\eqref{def:scaled-enflo-type} for $f$. This is measured by the approximation property~\eqref{approximation}.

To obtain inequality~\eqref{def:scaled-enflo-type} from~\eqref{approximation} and~\eqref{smoothing}, choose $m \in 4\bbN$ and note that by the triangle inequality and convexity,
\begin{multline}\label{use-of-convexity-smooth-approx}
\left\|{f\left({x+\frac m 2 \e}\right)-f(x)}\right\|_X^p \le 3^{p-1}\left\|{f*\nu \left({x+\frac m 2 \e}\right)-f*\nu(x)}\right\|_X^p
\\ +3^{p-1}\left\|{f*\nu \left({x+\frac m 2 \e}\right)-f\left({x+\frac m 2 \e}\right)}\right\|_X^p
+3^{p-1}\left\|{f*\nu (x)-f(x)}\right\|_X^p.
\end{multline}
Also, by the triangle inequality and H\"older's inequality (remembering that $m$ is divisible by 4),
\begin{multline}\label{use-of-holder}
\left\|{f*\nu\left({x+\frac m 2 \e}\right)-f*\nu(x)}\right\|_X^p \le \left({\sum_{t=1}^{m/4}\left\|{f*\nu(x+ 2t \e)-f*\nu(x+2(t-1)\e)}\right\|_X}\right)^p
\\ \le \left({\frac m 4}\right)^{p-1}\sum_{t=1}^{m/4}\left\|{f*\nu(x+ 2t \e)-f*\nu(x+2(t-1)\e)}\right\|_X^p.
\end{multline}
Integrating~\eqref{use-of-convexity-smooth-approx} over $\bbZ_m^n$ while using~\eqref{use-of-holder} and the translation invariance of $\mu$,
\begin{align}
\nonumber \int_{\bbZ_m^n}\left\|{f\left({x+\frac m 2 \e}\right)-f(x)}\right\|_X^pd\mu(x) & \lesssim 3^p\int_{\bbZ_m^n}\|f*\nu(x)-f(x)\|_X^pd\mu(x)
\\ & \quad + m^p \int_{\bbZ_m^n}\|f*\nu(x+\e)-f*\nu(x-\e)\|_X^pd\mu(x).
\end{align}
Using the approximation property~\eqref{approximation} and the smoothing property~\eqref{smoothing}, we get
\begin{multline}\label{almost-final}
\int_{\bbZ_m^n}\left\|{f\left({x+\frac m 2 \e}\right)-f(x)}\right\|_X^pd\mu(x)
\\ \lesssim_X (A^p+m^pS^p)\sum_{j=1}^n\int_{\bbZ_m^n}\|f(x+e_j)-f(x)\|_X^pd\mu(x).
\end{multline}
If we could find a smoothing and approximation scheme for which $S \lesssim 1$ and $A \lesssim m$, then~\eqref{almost-final} would imply
\begin{align}\label{final}
\int_{\bbZ_m^n}\left\|{f\left({x+\frac m 2 \e}\right)-f(x)}\right\|_X^pd\mu(x) \lesssim_X m^pS^p\sum_{j=1}^n\int_{\bbZ_m^n}\|f(x+e_j)-f(x)\|_X^pd\mu(x),
\end{align}
which is precisely the desired scaled Enflo type inequality~\eqref{def:scaled-enflo-type}. Thus, the goal is to come up with a smoothing and approximation scheme with $S\lesssim 1$ and $A$ as small as possible. In~\cite{MN07} it was shown that one can choose and $A\lesssim n^{3-2/p}$. Here we show that we can in fact choose $A \lesssim n^{2-1/p} \log n$.
\begin{remark}
In the context of metric cotype, it was shown in~\cite{GMN10} that the scheme of smoothing and approximation necessarily has limitations. Specifically, it was shown that using such a scheme implies that $m$ is bigger than some function of $n$. However, in the context of type, this is no longer the case. If we assume that the notions of Rademacher type and Enflo type are in fact the same for the class of Banach spaces, then the smoothing property~\eqref{smoothing} should hold for $f$ itself in which case the approximation property~\eqref{approximation} becomes trivial.
\end{remark}

\section{Proof of Theorem~\ref{main-thm-type} }\label{sec:proof-of-thm}
Let $f:\bbZ_m^n \to X$, and fix an odd integer $0<k<m/2$. We follow the notations in~\cite{GMN10} and define the following family of averaging operators.
For $f \colon \bbZ_m^n \to X$, $k<m/2$ an odd integer and $B \subseteq [n]$, let
\begin{align}\label{def:delta}
\Delta_Bf(x)\stackrel{\mathrm{def}}{=} \frac 1 {\mu(L_B)}\int_{\bbZ_m^n} f(x+y)d\mu(y),
\end{align}
where
\[ L_B\stackrel{\mathrm{def}}{=}\{y\in \bbZ_m^n:\ \forall i\notin B,\  y_i=0;  \forall i\in[n]\ y_i \text{ is even}; d_{\bbZ_m^n}(0,y)<k \}.\]
As we mentioned in Section~\ref{sec:smooth-approx}, the proof of inequality~\eqref{def:scaled-enflo-type} will follow once we have the smoothing and approximation properties. The approximation property is given by the following lemma, which was already proved in~\cite{MN07}.
\begin{lemma}[Lemma 2.2 in~\cite{MN07}]\label{lemma:approx}
For every Banach space $X$ and for every $f: \bbZ_m^n \to X$,
\begin{align*}
\int_{\bbZ_m^n}\left \|{\Delta_{[n]}f(x)-f(x)}\right\|_X^pd\mu(x) \le (k-1)^p n^{p-1}\sum_{j=1}^n\int_{\bbZ_m^n}\|f(x+e_j)-f(x)\|_X^pd\mu(x).
\end{align*}
\end{lemma}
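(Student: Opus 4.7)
The plan is a standard path-integration argument. Since $\mu$ is uniform, $\Delta_{[n]}f(x)-f(x)=\frac{1}{|L_{[n]}|}\sum_{y\in L_{[n]}}(f(x+y)-f(x))$, and Jensen's inequality applied to $\|\cdot\|_X^p$ yields
\[
\|\Delta_{[n]}f(x)-f(x)\|_X^p \;\le\; \frac{1}{|L_{[n]}|}\sum_{y\in L_{[n]}}\|f(x+y)-f(x)\|_X^p.
\]
Fix $y\in L_{[n]}$. The constraint $d_{\bbZ_m^n}(0,y)<k$ forces $d_{\bbZ_m}(0,y_j)\le k-1$ for every $j$, so I would connect $0$ to $y$ by a canonical path $0=y^{(0)},y^{(1)},\dots,y^{(r_y)}=y$ consisting of unit edge-steps $\pm e_{j_i(y)}$, built by sweeping through the coordinates one at a time along the shorter arc of each cycle. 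This path has length $r_y=\sum_{j=1}^n d_{\bbZ_m}(0,y_j)\le n(k-1)$. The triangle inequality and H\"older's inequality then give
\[
\|f(x+y)-f(x)\|_X^p \;\le\; r_y^{p-1}\sum_{i=1}^{r_y}\|f(x+y^{(i)})-f(x+y^{(i-1)})\|_X^p \;\le\; (n(k-1))^{p-1}\sum_{i=1}^{r_y}\|f(x+y^{(i)})-f(x+y^{(i-1)})\|_X^p.
\]

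After integrating in $x$ and invoking the translation invariance of $\mu$, each telescoping term becomes $\int_{\bbZ_m^n}\|f(x+e_{j_i(y)})-f(x)\|_X^p\,d\mu(x)$. The crux is then to control the total multiplicity
\[
N_j \;=\; \sum_{y\in L_{[n]}}\bigl|\{i:\,j_i(y)=j\}\bigr| \;=\; \sum_{y\in L_{[n]}}d_{\bbZ_m}(0,y_j).
\]
Here I would exploit the key observation that $L_{[n]}$ is invariant under permutations of the coordinates, so $N_j$ does not depend on $j$. Consequently $N_j=\frac{1}{n}\sum_{y\in L_{[n]}}r_y\le(k-1)|L_{[n]}|$. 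Substituting back produces the factor $(n(k-1))^{p-1}(k-1)=n^{p-1}(k-1)^p$ in front of $\sum_{j=1}^n\int_{\bbZ_m^n}\|f(x+e_j)-f(x)\|_X^p\,d\mu(x)$, which is exactly the asserted inequality.

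The real work is essentially bookkeeping: one needs a path whose number of direction-$j$ steps equals $d_{\bbZ_m}(0,y_j)$ so that $r_y$ is $\sum_j d_{\bbZ_m}(0,y_j)\le n(k-1)$ rather than something worse, and one needs the $S_n$-symmetry of $L_{[n]}$ to redistribute the total path length evenly across the $n$ directions. Without that symmetry step one would lose an extra factor of $n$, arriving at $n^p(k-1)^p$ instead of the sharper $n^{p-1}(k-1)^p$ stated in the lemma.
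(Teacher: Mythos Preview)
Your argument is correct and follows essentially the same route as the paper: Jensen's inequality to pass to $\|f(x+y)-f(x)\|_X^p$, a coordinate-by-coordinate path along the short arc in each factor, H\"older with $r_y=\sum_j d_{\bbZ_m}(0,y_j)\le n(k-1)$, and translation invariance of $\mu$.

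One small simplification: the $S_n$-symmetry step is not actually needed. Since each $y\in L_{[n]}$ satisfies $d_{\bbZ_m}(0,y_j)\le k-1$, the pointwise bound already gives $N_j=\sum_{y\in L_{[n]}}d_{\bbZ_m}(0,y_j)\le (k-1)|L_{[n]}|$ directly, and this is exactly how the paper concludes (it bounds $\|z\|^{p-1}|z_j|\le (n(k-1))^{p-1}(k-1)$ termwise). So your claim that without symmetry one would lose a factor of $n$ is not right; the sharp constant $n^{p-1}(k-1)^p$ comes out of the trivial pointwise bound alone.
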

The smoothing property is the new ingredient in this note.
\begin{lemma}\label{lemma:smoothing}
Assume that $X$ is a Banach space with Rademacher type $p$. Then for every $k\gtrsim n\log n$ we have for every $f \colon \bbZ_m^n \to X$,
\begin{multline*}
\int_{\bbZ_m^n}\int_{\{-1,1\}^n}\left\|{\Delta_{[n]}f(x+\e)-\Delta_{[n]}f(x-\e)}\right\|_X^pd\tau(\e)d\mu(x)
\\  \lesssim_X \sum_{j=1}^n\int_{\bbZ_m^n}\|f(x+e_j)-f(x)\|_X^pd\mu(x).
\end{multline*}
\end{lemma}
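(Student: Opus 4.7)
The plan is as follows. For fixed $x \in \bbZ_m^n$, regard $F_x(\e) := \Delta_{[n]}f(x+\e) - \Delta_{[n]}f(x-\e)$ as a function of $\e \in \{-1,1\}^n$; it is odd, hence $\bbE_\e F_x = 0$. The first step is Pisier's inequality:
\[
\bbE_\e\|F_x(\e)\|_X^p \lesssim (\log n)^p \, \bbE_{\e}\bbE_{\delta}\Big\|\sum_{j=1}^n \delta_j\big(F_x(\e) - F_x(\e^{(j)})\big)\Big\|_X^p,
\]
where $\e^{(j)}$ denotes $\e$ with the $j$-th coordinate flipped. The second step is Rademacher type $p$ applied to the inner Rademacher average over $\delta$ (which is independent of the vectors $u_j := F_x(\e) - F_x(\e^{(j)})$), yielding $\bbE_\delta\|\sum_j \delta_j u_j\|_X^p \leq T_p(X)^p \sum_j \|u_j\|_X^p$.

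Expanding $u_j = [\Delta_{[n]}f(x+\e) - \Delta_{[n]}f(x+\e-2\e_j e_j)] - [\Delta_{[n]}f(x-\e) - \Delta_{[n]}f(x-\e+2\e_j e_j)]$, using the triangle inequality, and integrating over $x$ with translation invariance of $\mu$, each integral $\int \bbE_\e \|u_j\|_X^p d\mu(x)$ is controlled by a constant multiple of $\int \|\Delta_{[n]}f(y+2e_j) - \Delta_{[n]}f(y)\|_X^p d\mu(y)$. Thus the lemma reduces to proving, for $k \gtrsim n\log n$ and each $j$,
\[
\int \|\Delta_{[n]}f(y+2e_j) - \Delta_{[n]}f(y)\|_X^p d\mu(y) \lesssim_X \frac{1}{(\log n)^p}\int \|f(y+e_j) - f(y)\|_X^p d\mu(y),
\]
whose $1/(\log n)^p$ exactly absorbs the $(\log n)^p$ coming from Pisier.

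To prove this smoothing estimate, use the identity
\[
\Delta_{[n]}f(y+2e_j) - \Delta_{[n]}f(y) = \frac{1}{|L_{[n]}|}\sum_{w \in L_{[n]}}\big[f(y+w+2e_j) - f(y+w)\big]
\]
and telescope the sum in the $j$-direction. Summing over $w_j$ for fixed $w_{\neq j}$ collapses the expression to one supported on $w$'s at the extremal $j$-coordinates of $L_{[n]}$, whose cardinality is essentially the $(n-1)$-dimensional $\ell_1$-ball of radius $\sim k$, i.e., a fraction $\sim n/k$ of $|L_{[n]}|$. Under $k \gtrsim n\log n$ this fraction is $\lesssim 1/\log n$, which is the source of the desired logarithmic gain.

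The main obstacle is obtaining the full $(1/\log n)^p$ factor, rather than only $O(1)$, in the smoothing estimate: a naive Jensen-plus-triangle bound on the boundary expression loses the $(n/k)^p$ prefactor to the $(k/n)^p$ path-length cost incurred when telescoping each boundary difference (of typical length $\sim k/n$ in the $j$-direction) into unit-step differences of $f$. A sharper argument---likely another application of Rademacher type $p$ of $X$ within the path-telescoping, combined with the additional averaging supplied by $\Delta_{[n]}$---is needed to prevent this cancellation, and is the technical core of the augmentation of the MN07 scheme.
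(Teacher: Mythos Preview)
Your plan contains a genuine gap at its core: the single-coordinate smoothing estimate you reduce to,
\[
\int_{\bbZ_m^n} \|\Delta_{[n]}f(y+2e_j) - \Delta_{[n]}f(y)\|_X^p \, d\mu(y) \;\lesssim_X\; \frac{1}{(\log n)^p}\int_{\bbZ_m^n}\|f(y+e_j)-f(y)\|_X^p\,d\mu(y),
\]
is \emph{false}, so the $(\log n)^p$ loss coming from Pisier's inequality cannot be absorbed along this route. To see this, take $f(x)=g(x_j)$ for a scalar function $g$ on $\bbZ_m$ (say $g(t)=e^{2\pi i t/m}$ with $m\gg k$). Then $\Delta_{[n]}f$ depends only on $x_j$ as well, and one computes directly that $\|\Delta_{[n]}f(\cdot+2e_j)-\Delta_{[n]}f\|_{L^p}$ and $\|f(\cdot+e_j)-f\|_{L^p}$ are of comparable size, with no decay in $n$ or $k$ whatsoever. (Incidentally, $L_{[n]}$ is the $\ell_\infty$-cube $(-k,k)^n\cap(2\bbZ)^n$, not an $\ell_1$-ball---see the paper's equation for $\Delta_{[n]}f$ in the approximation lemma---so your ``fraction $\sim n/k$'' heuristic is off; in the cube case the telescoped boundary has exactly $k^{n-1}$ terms and each has $j$-length $2k$, which cancel perfectly. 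But even with the $\ell_1$ picture the example above kills any hope of a gain.) Your closing suggestion of inserting another application of Rademacher type into the one-dimensional path-telescoping cannot help either, since there is no Rademacher structure along a single coordinate axis.

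The paper avoids this trap by \emph{not} applying Pisier to $\Delta_{[n]}f(x+\e)-\Delta_{[n]}f(x-\e)$. Instead it invokes a combinatorial identity from~\cite{GMN10} that writes this difference (up to a harmless factor $((k+1)/k)^{n-1}$) as an explicit Rademacher sum $\sum_{j=1}^n \e_j\big[\eps f(x+e_j)-\eps f(x-e_j)\big]$ plus correction terms $R_{i,l}f(x,\e)$ weighted by $k^{-i}$. Rademacher type applies \emph{directly} to the main term, with no logarithmic loss. Pisier's inequality (and the accompanying $(\log n)^p$) is used only inside the bound for the corrections $R_{i,l}$, where it is swallowed by the geometric factor $(n/k)^{ip}$ once $k\gtrsim n\log n$. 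The decomposition, not a sharper coordinate-wise smoothing bound, is the technical heart of the argument.
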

The proof of Theorem~\ref{main-thm-type} now follows immediately.
\begin{proof}[Proof of Theorem~\ref{main-thm-type}]
As we saw in Section~\ref{sec:smooth-approx}, we must have $S \lesssim 1$ and $A \lesssim m$. By Lemma~\ref{lemma:approx} and Lemma~\ref{lemma:smoothing} the smoothing and approximation properties hold with $A =  (k-1)n^{1-1/p}$ and $S=1$, assuming that $k \gtrsim n\log n$. This implies $A \gtrsim n^{2-1/p}\log n$ and therefore $m \gtrsim n^{2-1/p}\log n$.
\end{proof}

\section{Proof of Lemma~\ref{lemma:approx} and Lemma~\ref{lemma:smoothing} }\label{sec:proof-of-lemmas}
Lemma~\ref{lemma:approx} was already proved in~\cite{MN07}. However, we repeat its proof for the sake of completeness in Subsection~\ref{subsection:approx}. The proof of Lemma~\ref{lemma:smoothing} is presented in Subsection~\ref{subsection:smoothing}.

\subsection{Proof of Lemma~\ref{lemma:smoothing} }\label{subsection:smoothing}
We recall some notation from~\cite{GMN10}: for $\e \in \{-1,1\}^n$ and $B \subseteq [n]$, let $\e_B$ be the restriction of $\e$ to the coordinates of $B$. Also, for $\e,\e'\in\{-1,1\}^n$, let $\langle\e,\e'\rangle = \sum_{j=1}^n\e_j\e'_j$.
Fix $x\in\bbZ_m^n$ and $\e \in \{-1,1\}^n$. Let
\begin{align}\label{def:R}
R_{i,l}f(x,\e) \stackrel{\mathrm{def}}{=} \sum_{\substack{ S \subseteq [n] \\ |S|=i}} \sum_{\substack{\delta \in \{-1,1\}^{S} \\ \langle
\delta_{S}, \e_{S} \rangle=i-2l}}\left[{\Delta_{[n]\setminus S}f(x+\delta_Sk+\e_{[n]\setminus S}) -\Delta_{[n]\setminus S}f(x+\delta_Sk-\e_{[n]\setminus S})}\right].
\end{align}
Also, recall from~\cite{MN08} the following averaging operators:
\begin{align}\label{eq:def:Upsilon}
\eps f(x) \stackrel{\mathrm{def}}{=} \frac 1 {\mu(S(j,k))} \int _{\bbZ_m^n}f(x+y)d\mu(y),
\end{align}
where
\begin{align*}
S(j,k)&\stackrel{\mathrm{def}}{=} \{y\in\bbZ_{m}^{n}\colon\ y_{j} \text{ is even};\
\quad \forall l \in [n]-{\{j\}}, y_{l}  \text{ is odd};\  d_{\bbZ_{m}^{n}}(0,y)\leq{k}\}.
\end{align*}
The following identity follows immediately from Lemma 3.8 in~\cite{GMN10}:
\begin{align}\label{follows-from}
\sum_{j=1}^n\e_j\left[{\eps f(x+e_j)-\eps f(x-e_j)}\right] = \sum_{i=0}^n\sum_{l=0}^i h_{i,l}\frac{k^{n-i-1}}{(k+1)^{n-1}} R_{i,l}f(x,\e),
\end{align}
where $\{h_{i,l}\}_{l \le i}$ are scalars satisfying
\begin{align}\label{initial-condition}
h_{0,0}&=1,
\\
\label{bound-h}
|h_{i,l}| &\lesssim \frac {(i-l)!l!}{2^i}.
\end{align}
Note that the first term on the right hand side of~\eqref{follows-from} equals
\begin{align*}
\left({\frac k {k+1}}\right)^{n-1}R_{0,0}f(x,\e) = \left({\frac k {k+1}}\right)^{n-1}\left[{\Delta_{[n]}f(x+\e)-\Delta_{[n]}f(x-\e)}\right].
\end{align*}
Using identity~\eqref{follows-from}, the triangle inequality and the fact that $\left({\frac {k+1}k}\right)^{n-1} \lesssim 1$,
\begin{multline}\label{iden-for-delta}
 \left\|{\Delta_{[n]}f(x+\e)-\Delta_{[n]}f(x-\e)}\right\|_X \\ \lesssim \left\|{\sum_{j=1}^n\e_j\left[{\eps f(x+e_j)-\eps f(x-e_j)}\right]}\right\|_X
    + \left\|{\sum_{i=1}^n\sum_{l=0}^i \frac{h_{i,l}}{k^{i}}R_{i,l}f(x,\e)}\right\|_X.
\end{multline}
The triangle inequality implies
\begin{align}
\left\|{\sum_{i=1}^n\sum_{l=0}^i \frac{h_{i,l}}{k^{i}}R_{i,l}f(x,\e)}\right\|_X^p \le \left({\sum_{i=1}^n\sum_{l=0}^i \frac{|h_{i,l}|}{k^{i}}\left\|{R_{i,l}f(x,\e)}\right\|_X}\right)^p.
\end{align}
Convexity of the function $t\mapsto t^p$ and H\"older's inequality imply
\begin{align}\label{use-of-convexity-iterating}
\nonumber \left({\sum_{i=1}^n\sum_{l=0}^i \frac{|h_{i,l}|}{k^{i}}\left\|{R_{i,l}f(x,\e)}\right\|_X}\right)^p & =  \left({\sum_{i=1}^n2^{-(i+1)}\sum_{l=0}^i 2^{i+1}\frac{|h_{i,l}|}{k^{i}}\left\|{R_{i,l}f(x,\e)}\right\|_X}\right)^p
\\ \nonumber & \le \sum_{i=1}^n2^{-(i+1)}\left({\sum_{l=0}^i2^{i+1}\frac{|h_{i,l}|}{k^{i}}\left\|{R_{i,l}f(x,\e)}\right\|_X}\right)^p
\\  & \le \sum_{i=1}^n\sum_{l=0}^i2^{(i+1)(p-1)}(i+1)^{p-1}\frac{|h_{i,l}|^p}{k^{ip}}\left\|{R_{i,l}f(x,\e)}\right\|_X^p.
\end{align}
Therefore, combining~\eqref{iden-for-delta} and~\eqref{use-of-convexity-iterating}, we get
\begin{align}\label{bound-on-delta}
\nonumber \left\|{\Delta_{[n]}f(x+\e)-\Delta_{[n]}f(x-\e)}\right\|_X^p & \lesssim \left\|{\sum_{j=1}^n\e_j\left[{\eps f(x+e_j)-\eps f(x-e_j)}\right]}\right\|_X^p
\\ &\quad +\sum_{i=1}^n\sum_{l=0}^i2^{(i+1)(p-1)}(i+1)^{p-1}\frac{|h_{i,l}|^p}{k^{ip}}\left\|{R_{i,l}f(x,\e)}\right\|_X^p.
\end{align}
The next goal is to estimate the average of $\left\|{R_{i,l}f(x,\e)}\right\|_X^p$ over $x \in \bbZ_m^n$ and $\e \in \{-1,1\}^n$.
\begin{lemma}\label{lemma:bound-R}
Assume that $X$ is a Banach space with Rademacher type $p$. Then for all $0 \le l \le i \le n$,
\begin{multline}\label{lemma-bound-R}
\int_{\bbZ_m^n}\int_{\{-1,1\}^n}\left\|{R_{i,l}f(x,\e)}\right\|_X^pd\tau(\e)d\mu(x)
\\ \lesssim_X  (\log n)^p\binom n i ^{p} \binom i l^{p}
 \cdot \sum_{j=1}^n\int_{\bbZ_m^n}\|f(x+e_{j})-f(x)\|_X^pd\mu(x).
\end{multline}
\end{lemma}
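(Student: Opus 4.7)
The plan is to extract the outer sum over $(S,\delta)$ in $R_{i,l}f(x,\e)$ by convexity, reduce each summand via translation invariance to a symmetric difference of the smoothed function $\Delta_{[n]\setminus S}f$, and then apply Pisier's inequality together with the Rademacher type $p$ assumption. The Pisier step will be the main obstacle; it is precisely what produces the extra $(\log n)^p$ factor in Theorem~\ref{main-thm-type}.

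First I would count: for fixed $\e\in\{-1,1\}^n$ the number of $(S,\delta)$ pairs indexing $R_{i,l}f(x,\e)$ is exactly $\binom{n}{i}\binom{i}{l}$, since the constraint $\langle\delta_S,\e_S\rangle = i-2l$ forces $\delta$ to differ from $\e_S$ in exactly $l$ positions of $S$. Writing $T_{S,\delta}(x,\e)$ for the summand, the triangle inequality followed by Jensen's inequality (on the uniform measure over those $\binom{n}{i}\binom{i}{l}$ terms) gives
\[\|R_{i,l}f(x,\e)\|_X^p \le \binom{n}{i}^{p-1}\binom{i}{l}^{p-1}\sum_{|S|=i}\sum_{\substack{\delta\in\{-1,1\}^S\\ \langle\delta_S,\e_S\rangle=i-2l}}\|T_{S,\delta}(x,\e)\|_X^p.\]
Integrating against $d\mu\,d\tau$, swapping sum and integral, and exploiting independence of the two blocks $\e_S$ and $\e_{[n]\setminus S}$ under $\tau$ removes the constraint at the cost of a factor $\binom{i}{l}/2^i$, since $\|T_{S,\delta}(x,\e)\|_X^p$ depends on $\e$ only through $\e_{[n]\setminus S}$. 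Translation invariance of $\mu$ then eliminates the shift $\delta_S k$ from $T_{S,\delta}$, so each pair $(S,\delta)$ contributes the same quantity
\[I_S := \int_{\bbZ_m^n}\int_{\{-1,1\}^{[n]\setminus S}}\|\Delta_{[n]\setminus S}f(x+\e_{[n]\setminus S}) - \Delta_{[n]\setminus S}f(x-\e_{[n]\setminus S})\|_X^p\,d\tau(\e)\,d\mu(x).\]
Summing over $\binom{n}{i}$ choices of $S$ and $2^i$ choices of $\delta$, combined with the factor $\binom{i}{l}/2^i$ and the front $\binom{n}{i}^{p-1}\binom{i}{l}^{p-1}$, produces exactly the desired $\binom{n}{i}^p\binom{i}{l}^p$ multiplier in front of $I_S$.

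The heart of the argument will be the estimate $I_S \lesssim_X (\log n)^p \sum_{j=1}^n\int_{\bbZ_m^n}\|f(x+e_j)-f(x)\|_X^p\,d\mu(x)$. For this, I would fix $x$ and apply Pisier's inequality~\cite{Pisier86} to the $X$-valued function $h(\e') := \Delta_{[n]\setminus S}f(x+\e'_{[n]\setminus S})$ on the sub-cube $\{-1,1\}^{[n]\setminus S}$ (of dimension $n-i\le n$; the case $i=n$ is trivial since then $R_{n,l}f\equiv 0$). Pisier's inequality controls $\bbE_{\e'}\|h(\e')-\bbE h\|_X^p$ by $(\log n)^p\bbE_{\e'}\bbE_\sigma\|\sum_{j\in[n]\setminus S}\sigma_j\partial_jh(\e')\|_X^p$, and Rademacher type $p$ then bounds the inner Rademacher sum by $T_p(X)^p\sum_j\|\partial_jh(\e')\|_X^p$. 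The symmetric-difference quantity $\bbE_{\e'}\|h(\e')-h(-\e')\|_X^p$ is in turn reduced to $\bbE_{\e'}\|h(\e')-\bbE h\|_X^p$ via the triangle inequality at the cost of a constant depending only on $p$.

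To close the loop I would bound each $\|\partial_jh(\e')\|_X^p$ in terms of a unit edge difference of $f$. Jensen's inequality for the averaging operator $\Delta_{[n]\setminus S}$ together with translation invariance of $\mu$ reduces $\bbE_x\|h(\e')-h(\sigma_j\e')\|_X^p$ to $\bbE_y\|f(y)-f(y-2\e'_je_j)\|_X^p$, and splitting a step of length $2$ into two unit steps by the triangle inequality brings this down to $\lesssim_p\bbE_y\|f(y+e_j)-f(y)\|_X^p$. As noted above, the Pisier step is the only place where a logarithm appears, and for UMD spaces it can be bypassed via~\cite{NS02}, which is why the logarithmic factor in Theorem~\ref{main-thm-type} disappears in that case.
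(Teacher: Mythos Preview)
Your proposal is correct and follows essentially the same route as the paper: H\"older's inequality to extract the $(S,\delta)$-sum with the $\binom{n}{i}^{p-1}\binom{i}{l}^{p-1}$ prefactor, translation invariance of $\mu$ to kill the shift $\delta_Sk$, Pisier's inequality followed by Rademacher type to control the diagonal difference, and finally reduction to unit edge differences. The only cosmetic discrepancy is the ordering of two steps: the paper first removes the averaging $\Delta_{[n]\setminus S}$ by Jensen (it is a convolution with a probability measure) and then applies Pisier's inequality directly to the odd function $g_x(\e)=f(x+\e_{[n]\setminus S})-f(x-\e_{[n]\setminus S})$, which already has mean zero; you instead keep $\Delta_{[n]\setminus S}$ through the Pisier step and pay an extra triangle inequality to pass from $\|h(\e')-h(-\e')\|^p$ to $\|h(\e')-\bbE h\|^p$. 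Both orderings work and yield the same bound.
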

\begin{proof}
First, note that on the right hand side of~\eqref{def:R} there are $\binom i l$ terms for which $\langle \delta_S,\e_S \rangle =i-2l$ and there are $\binom n i$ sets $S$ of size $i$. Therefore, applying the triangle inequality and then H\"older's inequality,
\begin{multline}\label{use-of-convexity-for-R}
\left\|{R_{i,l}f(x,\e)}\right\|_X^p \le \binom n i ^{p-1} \binom i l^{p-1}
\\  \cdot \sum_{\substack{ S \subseteq [n] \\ |S|=i}} \sum_{\substack{\delta \in \{-1,1\}^{S} \\ \langle
\delta_{S}, \e_{S} \rangle=i-2l}}\|\Delta_{[n]\setminus S}f(x+\delta_Sk+\e_{[n]\setminus S}) -\Delta_{[n]\setminus S}f(x+\delta_Sk-\e_{[n]\setminus S})\|_X^p.
\end{multline}
We would like to integrate inequality~\eqref{use-of-convexity-for-R} over $x\in \bbZ_m^n$ and $\e \in \{-1,1\}^n$. First, note that by the invariance of $\mu$ we have
\begin{multline}\label{use-of-invariance}
\int_{\bbZ_m^n}\left\|{\Delta_{[n]\setminus S}f(x+\delta_Sk+\e_{[n]\setminus S})- \Delta_{[n]\setminus S}f(x+\delta_Sk-\e_{[n]\setminus S})}\right\|_X^pd\mu(x)
\\ = \int_{\bbZ_m^n}\left\|{\Delta_{[n]\setminus S}f(x+\e_{[n]\setminus S})- \Delta_{[n]\setminus S}f(x-\e_{[n]\setminus S})}\right\|_X^pd\mu(x).
\end{multline}
Since $\Delta_{[n]\setminus S}$ is a convolution with a probability measure we also have
\begin{multline}\label{bound-on-delta2}
\int_{\bbZ_m^n}\left\|{\Delta_{[n]\setminus S}f(x+\e_{[n]\setminus S})- \Delta_{[n]\setminus S}f(x-\e_{[n]\setminus S})}\right\|_X^pd\mu(x)
\\ \le  \int_{\bbZ_m^n}\left\|{f(x+\e_{[n]\setminus S})- f(x-\e_{[n]\setminus S})}\right\|_X^pd\mu(x).
\end{multline}
Thus, integrating~\eqref{use-of-convexity-for-R} over $x \in \bbZ_m^n$ while using~\eqref{use-of-invariance} and~\eqref{bound-on-delta2},
\begin{multline}\label{estimate-R}
\int_{\bbZ_m^n}\left\|{R_{i,l}f(x,\e)}\right\|_X^pd\mu(x)
\\ \le  2^p\binom n i ^{p-1} \binom i l^{p}\sum_{\substack{ S \subseteq [n] \\ |S|=i}} \int_{\bbZ_m^n}\|f(x+\e_{[n]\setminus S}) -f(x-\e_{[n]\setminus S})\|_X^pd\mu(x).
\end{multline}
Recall Pisier's inequality (see~\cite[Ch.\ 7]{Pisier86}): for every $g: \{-1,1\}^n \to X$,	
\begin{multline*}
\int_{\{-1,1\}^n}\left\|{g(\e)-\int_{\{-1,1\}^n} gd\tau}\right\|_X^pd\tau(\e)
\\ \le \left({e\log n}\right)^p\int_{\{-1,1\}^n\times \{-1,1\}^n}\left\|{\sum_{j=1}^n\e_j'\left[{g\left(\e^{(j)}\right)-g(\e)}\right]}\right\|_X^pd\tau(\e')d\tau(\e),
\end{multline*}
where $\e^{(j)} \stackrel{\mathrm{def}}= (\e_1,\dots,\e_{j-1},-\e_{j},\e_{j+1},\dots,\e_n)$. For a fixed $x \in \bbZ_m^n$ and $S \subseteq [n]$, define $g_x: \{-1,1\}^n\to X$ to be
$g_x(\e)\stackrel{\mathrm{def}}{=}f(x+\e_{[n]\setminus S})-f(x-\e_{[n]\setminus S})$.
Clearly, $\int_{\{-1,1\}^n} gd\tau = 0$.
Applying Pisier's inequality to $g_x$ thus implies,
\begin{multline}\label{use-of-pisier}
\int_{\{-1,1\}^n}\left\|{g_x(\e)}\right\|_X^p d\tau(\e)
\\ \le \left({e\log n}\right)^p \int_{\{-1,1\}^n\times\{-1,1\}^n}\left\|{\sum_{j =1}^n \e'_j \left[{g_x\left(\e^{(j)}\right)-g_x(\e)}\right]}\right\|_X^pd\tau(\e')d\tau(\e).
\end{multline}
Applying the Rademacher type property of $X$, we get
\begin{align}\label{use-of-type-g}
\int_{\{-1,1\}^n}\left\|{\sum_{j =1}^n \e'_j\left[{g_x\left(\e^{(j)}\right)-g_x(\e)}\right]}\right\|_X^pd\tau(\e')
& \lesssim_X \sum_{j =1}^n\left\|{g_x\left(\e^{(j)}\right)-g_x(\e)}\right\|_X^p.
\end{align}
Now, by the definition of $g_x$ it follows immediately that $$g_x(\e)-g_x\left(\e^{(j)}\right) = \left[f(x+\e_{[n]\setminus S})-f\left(x+\e_{[n]\setminus S}^{(j)}\right)\right]+ \left[f(x-\e_{[n]\setminus S})-f\left(x-\e_{[n]\setminus S}^{(j)}\right)\right]$$ (in the case $j\in S$ we let $\e^{(j)}_{[n]\setminus S} = \e_{[n]\setminus S}$, in which case the difference is zero). Thus, using convexity and the translation invariance of $\mu$,
\begin{align}\label{bound-g-der}
\nonumber \int_{\bbZ_m^n}\left\|{g_x(\e^{(j)})-g_x(\e)}\right\|_X^pd\mu(x) & \le 2^p \int_{\bbZ_m^n}\left\|{f(x+e_j)-f(x-e_j)}\right\|_X^pd\mu(x)
\\ & \le 4^p \int_{\bbZ_m^n}\left\|{f(x+e_j)-f(x)}\right\|_X^pd\mu(x).
\end{align}
Integrating~\eqref{use-of-pisier} over $x\in \bbZ_m^n$ and using~\eqref{use-of-type-g} and~\eqref{bound-g-der}, we get
\begin{multline}\label{final-est-diagonal}
\int_{\bbZ_m^n}\int_{\{-1,1\}^n}\left\|{f(x+\e_{[n]\setminus S})-f(x-\e_{[n]\setminus S})}\right\|_X^pd\tau(\e)d\mu(x)
\\ \lesssim_X (\log n)^p \sum_{j=1}^n\int_{\bbZ_m^n}\left\|{f(x+e_j)-f(x)}\right\|_X^pd\mu(x).
\end{multline}
Plugging~\eqref{final-est-diagonal} into~\eqref{estimate-R}, we get
\begin{multline}
\int_{\bbZ_m^n}\int_{\{-1,1\}^n}\left\|{R_{i,l}f(x,\e)}\right\|_X^pd\tau(\e)d\mu(x)
\\ \nonumber  \lesssim_X  (\log n)^p\binom n i ^{p} \binom i l^{p}\sum_{j=1}^n\int_{\bbZ_m^n}\|f(x+e_j)-f(x)\|_X^pd\mu(x).
\end{multline}
The proof of Lemma~\ref{lemma:bound-R} is therefore complete.
\end{proof}
We are now in a position to prove Lemma~\ref{lemma:smoothing}.
\begin{proof}[Proof of Lemma~\ref{lemma:smoothing}]
Integrating inequality~\eqref{bound-on-delta} over $x\in \bbZ_m^n$ and $\e\in \{-1,1\}^n$, we get
\begin{multline}\label{bound-on-delta-int}
\int_{\bbZ_m^n}\int_{\{-1,1\}^n}\left\|{\Delta_{[n]}f(x+\e)-\Delta_{[n]}f(x-\e)}\right\|_X^pd\tau(\e)d\mu(x)
\\ \lesssim \int_{\bbZ_m^n}\int_{\{-1,1\}^n}\left\|{\sum_{j=1}^n\e_j\left[{\eps f(x+e_j)-\eps f(x-e_j)}\right]}\right\|_X^pd\tau(\e)d\mu(x)
\\ +\sum_{i=1}^n\sum_{l=0}^i2^{(i+1)(p-1)}(i+1)^{p-1}\frac{|h_{i,l}|^p}{k^{ip}}\int_{\bbZ_m^n}\int_{\{-1,1\}^n}\left\|{R_{i,l}f(x,\e)}\right\|_X^pd\tau(\e)d\mu(x).
\end{multline}
Applying the Rademacher type inequality to the vectors $\{\eps f(x+e_j)-\eps f(x-e_j)\}_{j=1}^n$,
\begin{multline}\label{use-of-type-eps}
\int_{\{-1,1\}^n}\left\|{\sum_{j=1}^n \e_j\left[{\eps f(x+e_j)-\eps f(x-e_j)}\right]}\right\|_X^pd\tau(\e)
\\ \lesssim_X \sum_{j=1}^n\left\|{\eps f(x+e_j)-\eps f(x-e_j)}\right\|_X^p.
\end{multline}
Integrating~\eqref{use-of-type-eps} over $x\in\bbZ_m^n$ and using convexity and the fact the $\eps$ is a convolution with a probability measure implies
\begin{multline}\label{bound-first-term}
\int_{\bbZ_m^n}\int_{\{-1,1\}^n}\left\|{\sum_{j=1}^n \e_j\left[{\eps f(x+e_j)-\eps f(x-e_j)}\right]}\right\|_X^pd\tau(\e)d\mu(x)
\\ \lesssim_X \sum_{j=1}^n\int_{\bbZ_m^n}\|f(x+e_j)-f(x)\|_X^pd\mu(x).
\end{multline}
It remains to bound the second term in~\eqref{bound-on-delta-int}. For that, fix $1 \le i \le n$ and $0 \le l \le i$. Using Lemma~\ref{lemma:bound-R} and the estimate~\eqref{bound-h}, each term in the sum on the right hand side of~\eqref{bound-on-delta-int} can be bounded as follows:
\begin{multline}\label{boundtail}
2^{(i+1)(p-1)}(i+1)^{p-1}\frac{|h_{i,l}|^p}{k^{ip}}\int_{\bbZ_m^n}\int_{\{-1,1\}^n}\left\|{R_{i,l}f(x,\e)}\right\|_X^pd\tau(\e)d\mu(x)
\\ \lesssim_X 2^{i(p-1)}(i+1)^{p-1}\left({\frac{(i-l)!l!}{2^ik^{i}}}\right)^p \binom ni^{p}\binom il^p (\log n)^p \cdot \sum_{j=1}^n\int_{\bbZ_m^n}\|f(x+e_{j})-f(x)\|_X^pd\mu(x).
\end{multline}
Now,
\begin{align*}
2^{i(p-1)}(i+1)^{p-1}\left({\frac{(i-l)!l!}{2^ik^{i}}}\right)^p\binom ni^p\binom il^p (\log n)^p
&= \frac{(i+1)^{p-1}}{2^{i}}\left({\frac{n!}{(n-i)!k^i}}\right)^p(\log n)^p
\\ &\lesssim_p \frac{(i+1)^{p-1}}{2^{i}}\left({\frac{n}{k}}\right)^{ip}(\log n)^p.
\end{align*}
Thus,~\eqref{boundtail} becomes
\begin{multline}\label{bound-second-part}
2^{(i+1)(p-1)}(i+1)^{p-1}\frac{|h_{i,l}|^p}{k^{ip}}\int_{\bbZ_m^n}\int_{\{-1,1\}^n}\left\|{R_{i,l}f(x,\e)}\right\|_X^pd\tau(\e)d\mu(x)
\\ \lesssim_X \frac{(i+1)^{p-1}}{2^{i}}\left({\frac{n}{k}}\right)^{ip}(\log n)^p \cdot \sum_{j=1}^n\int_{\bbZ_m^n}\|f(x+e_{j})-f(x)\|_X^pd\mu(x).
\end{multline}
Plugging~\eqref{bound-second-part} and~\eqref{bound-first-term} into~\eqref{bound-on-delta-int}  implies
\begin{multline}\label{final-estimate-delta}
\int_{\bbZ_m^n}\int_{\{-1,1\}^n}\left\|{\Delta_{[n]}f(x+\e)-\Delta_{[n]}f(x-\e)}\right\|_X^pd\tau(\e)d\mu(x)
\\ \lesssim_X \left[1+{\sum_{i=1}^n\frac{(i+1)^{p}}{2^{i}}\left({\frac{n}{k}}\right)^{ip}(\log n)^p}\right]\sum_{j=1}^n\int_{\bbZ_m^n}\|f(x+e_{j})-f(x)\|_X^pd\mu(x).
\end{multline}
We always have $\frac{(i+1)^{p}}{2^{i}} \lesssim_p 1$. Thus, if we choose $k \gtrsim n\log n$, Lemma~\ref{lemma:smoothing} follows from~\eqref{final-estimate-delta}.
\end{proof}

\subsection{Proof of Lemma~\ref{lemma:approx} }\label{subsection:approx}
Given $z \in \bbZ_m^n$let $\|z\| \stackrel{\mathrm{def}}{=} \sum_{j=1}^n|z_j|$, where  $|z_j| \stackrel{\mathrm{def}}{=} \min\{z_j,m-z_j\}$. For $z_j \neq 0$, we let $\mathrm{sgn}(z_j)=1$ if $z_j=|z_j|$ and $\mathrm{sgn}(z_j)=-1$ otherwise.
By the triangle inequality, for every $x,z \in \bbZ_m^n$ we have
\begin{multline}\label{use-of-convexity2}
\|f(x+z)-f(x)\|_X  \le \sum_{j=1}^n\sum_{\ell=1}^{|z_j|}\left\|{f\left({x+\sum_{t=1}^{j-1}z_te_t+\ell\cdot \mathrm{sgn}(z_j)\cdot e_j}\right)}\right.
\\ \left.{-f\left({x+\sum_{t=1}^{j-1}z_te_t+(\ell-1)\cdot \mathrm{sgn}(z_j)\cdot e_j}\right)}\right\|_X.
\end{multline}
On the right hand side of~\eqref{use-of-convexity2} we have at most $\|z\|$ non-zero elements and therefore H\"older's inequality implies
\begin{multline}\label{use-of-holder2}
\|f(x+z)-f(x)\|_X^p  \le \|z\|^{p-1}\sum_{j=1}^n\sum_{\ell=1}^{|z_j|}\left\|{f\left({x+\sum_{t=1}^{j-1}z_te_t+\ell\cdot \mathrm{sgn}(z_j)\cdot e_j}\right)}\right.
\\ \left.{-f\left({x+\sum_{t=1}^{j-1}z_te_t+(\ell-1)\cdot \mathrm{sgn}(z_j)\cdot e_j}\right)}\right\|_X^p.
\end{multline}
Integrating~\eqref{use-of-holder2} over $x\in \bbZ_m^n$ and using the translation invariance of $\mu$, we get
\begin{align}\label{after-int}
\nonumber \int_{\bbZ_m^n}\|f(x+z)-f(x)\|_X^pd\mu(x)  &\le \|z\|^{p-1}\sum_{j=1}^n\sum_{\ell=1}^{|z_j|}\int_{\bbZ_m^n}\|f(x+\mathrm{sgn}(z_j)e_j)-f(x)\|_X^pd\mu(x)
\\ & = \|z\|^{p-1}\sum_{j=1}^n|z_j|\int_{\bbZ_m^n}\|f(x+e_j)-f(x)\|_X^pd\mu(x).
\end{align}
Since $k$ is odd, we have $|(-k,k)^n \cap (2\bbZ)^n|= k^n$, and thus we get by~\eqref{def:delta}
\begin{align}\label{delta-reminder}
\Delta_{[n]}f(x) = \frac 1 {k^n}\sum_{y\in (-k,k)^n \cap (2\bbZ)^n}f(x+y).
\end{align}
Using~\eqref{delta-reminder} and convexity implies
\begin{align}\label{use-of-convexity3}
\int_{\bbZ_m^n}\left \|{\Delta_{[n]}f(x)-f(x)}\right\|_X^pd\mu(x)\le \frac 1 {k^n}\sum_{z \in (-k,k)^n \cap (2\bbZ)^n}\int_{\bbZ_m^n}\|f(x+z)-f(x)\|_X^pd\mu(x).
\end{align}
Combining~\eqref{after-int} and~\eqref{use-of-convexity3} we get
\begin{align*}
\lefteqn{\int_{\bbZ_m^n}\left \|{\Delta_{[n]}f(x)-f(x)}\right\|_X^pd\mu(x)}
\\ &\qquad \le \frac 1 {k^n}\sum_{z \in (-k,k)^n \cap (2\bbZ)^n}\sum_{j=1}^n\|z\|^{p-1}|z_j|\int_{\bbZ_m^n}\|f(x+e_j)-f(x)\|_X^pd\mu(x)
\\ &\qquad \le (k-1)^pn^{p-1}\sum_{j=1}^n\int_{\bbZ_m^n}\|f(x+e_j)-f(x)\|_X^pd\mu(x).
\end{align*}
The proof is therefore complete. \qed

\bibliographystyle{abbrv}
\bibliography{enflo-type}

\end{document}